\let\oldmarginpar\marginpar
\renewcommand\marginpar[1]{\-\oldmarginpar[\raggedleft\footnotesize #1]%
{\raggedright\footnotesize #1}}
\newtheorem{theorem}{Theorem}[section]
\newtheorem{corollary}[theorem]{Corollary}
\newtheorem{proposition}[theorem]{Proposition}
\theoremstyle{definition}
\newtheorem{definition}[theorem]{Definition}
\theoremstyle{remark}
\numberwithin{equation}{section}
\def\Z{\mathbb Z}
\def\Q{\mathbb Q}
\def\kk{\Bbbk}
\def\N{\mathbb N}
\def\tnu{\widetilde{\nu}}
\def\texp{\exp_\tau}
\def\stau{(S^{-2}\tau)}
\def\dt{d_\tau}
\newcommand{\htau}{S^{-2}\tau}
\newcommand{\eps}{\varepsilon}
\newcommand{\ot}{\otimes}
\newcommand{\tG}{\Gamma^\tau}
\newcommand{\qt}{q_\tau}
\DeclareMathOperator{\End}{End}
\DeclareMathOperator{\tr}{tr}
\DeclareMathOperator{\ev}{ev}
\DeclareMathOperator{\coev}{coev}
\DeclareMathOperator{\Id}{Id}
\DeclareMathOperator{\ord}{ord}
\DeclareMathOperator{\Rep}{Rep}
\DeclareMathOperator{\lcm}{lcm}
\DeclareMathOperator{\GL}{GL}
\title[Twisted exponents and twisted FS indicators for Hopf algebras]{Twisted exponents and twisted Frobenius--Schur indicators for Hopf algebras}
\author{Daniel S. Sage}\address{Department of Mathematics\\
  Louisiana State University\\
  Baton Rouge, LA 70803}\email{sage@math.lsu.edu}
\author{Maria D. Vega}\address{Department of Mathematics\\
  North Carolina State University\\
  Raleigh, NC 27695}\email{mdvega@ncsu.edu}
\thanks{The first author was partially supported by a grant from the Simons
  Foundation (\#281502), and 
  the second author was partially supported by NSF grant
  DMS-0946431.}  \subjclass[2010]{Primary:16T05; Secondary:
  20C15} \keywords{Hopf algebra, exponent, Frobenius--Schur indicator}
\begin{document}

\begin{abstract}
  Classically, the exponent of a group is the least common multiple of
  the orders of its elements.  This notion was generalized by Etingof
  and Gelaki to  Hopf algebras. Kashina, Sommerh\"auser
  and Zhu later observed that there is a strong connection between
  exponents and Frobenius--Schur indicators.  In this paper, we
  introduce the notion of twisted exponents and show  there is a
  similar relationship between the twisted exponent and the twisted
  Frobenius--Schur indicators defined in previous work of the authors.
  In particular, we exhibit a new formula for the twisted indicators and use it to prove periodicity and rationality statements.
 \end{abstract}
\maketitle

\section{Introduction}
Classically, the exponent of a group $G$ is the least common multiple
of the orders of its elements.  More generally, the exponent of a
representation $(V,\rho)$ of $G$ is the exponent of
$\rho(G)\subset\GL(V)$.  In~\cite{EtingofGelaki:1999}, Etingof and
Gelaki, building on work of Kashina~\cite{Kashina:1999}, extended this
notion from groups to Hopf algebras.  Kashina, Sommerh\"auser and Zhu
later observed that there is a strong connection between exponents and
Frobenius--Schur indicators~\cite{KashinaSommerZhu:2006}.  Suppose
that $H$ is a semisimple Hopf algebra over an algebraically closed
field of characteristic zero, and let $V$ be a representation of $H$.
They found a formula for the FS indicators $\nu_m(V)$ which implies
that the indicators are $\exp(V)$-periodic and that the indicators lie
in the cyclotomic ring generated by the $\exp(V)$-th roots of unity.

Suppose that $H$ is a Hopf algebra endowed with a fixed automorphism
$\tau$ of finite order.  In this paper, we introduce a new invariant
called the twisted exponent of $V$ with respect to $\tau$.  For
groups, the twisted exponent has a simple explicit description.
Recall that the norm of an element $g\in G$ (in the sense
of~\cite{BumpGinz:2004}) is the product $g\tau(g)\dots\tau^{r-1}(g)$;
here, $r$ is the order of $\tau$.  The twisted exponent of $G$ is then
the least common multiple of the orders of the norms of the group
elements.  Our primary goal is to show that for $H$ semisimple over an
algebraically closed field of characteristic zero, the results of
\cite{KashinaSommerZhu:2006} generalize to the twisted context.  More
precisely, we exhibit a new formula for the twisted Frobenius--Schur
indicators defined in~\cite{SageVega:2012} and use it to prove
periodicity and rationality statements for the twisted indicators.

\section{Notation}
Let $H$ be a  Hopf algebra over a field $\kk$ with
comultiplication
$\Delta$, counit $\varepsilon$, and bijective antipode $S$.  
We will use the usual Sweedler notation for
iterated comultiplication: if $h\in H$, then 
\begin{equation*}\Delta^{m-1}(h)=\sum_{(h)}{h_1 \otimes h_2
  \otimes \cdots \otimes h_m}.
\end{equation*}
We will denote iterated multiplication by $\mu^m:H^{\otimes m}\to H$,
i.e., $\mu(l_1 \otimes l_2 \otimes \ldots \otimes l_m)= l_1l_2 \ldots
l_m$.  Let $\Rep(H)$ be the category of finite-dimensional left
$H$-modules; we only consider such $H$-modules in this paper.


Let $\tau$ be a Hopf algebra automorphism of $H$; in particular, $\tau$ commutes with $S$.  We will always assume that $\tau$ has
finite order $r$.  For all $j \in \N$, we define the \emph{$j$-th
  twisted Sweedler power} of $h \in H$ to be
 \begin{equation*}\tilde{h}_\tau^{[j]}:= \mu^{j} \circ \left(\Id \otimes \tau \otimes \cdots \otimes \tau^{j-1}\right)\circ \Delta^{j-1}(h).
 \end{equation*}
 More explicitly,  we have \begin{equation*}\tilde{h}_\tau^{[j]}=\sum_{(h)}{\left(h_1\tau\left(h_2\right)\cdots \tau^{j-1}\left(h_j\right)\right)}.
 \end{equation*}  We will write $\tilde{h}^{[j]}$ for
 $\tilde{h}_\tau^{[j]}$ when the automorphism is clear from context.


Suppose that $A$ is a linear endomorphism of $H$.  To simply notation, we
write $A^{\otimes [k,k+m)}:=A^k\otimes A^{k+1}\otimes\dots\otimes
A^{k+m-1}\in\End(H^{\otimes m})$.  In particular, $A^{\otimes
  [k,k+m)}=(A^k)^{\otimes m}\circ A^{\otimes [0,m)}$.   As an example,
  note that 
  $\tilde{h}^{[j]}= \mu^{j} \circ \tau^{\otimes [0,j)}\circ
  \Delta^{j-1}(h)$.

\section{Twisted exponents}
\subsection{Definition and first examples}\label{sec:def}

\begin{definition}\label{def:texpH}
 The \emph{twisted exponent}
$\texp(H)$ of $H$ is the smallest $k\in \N$ such that 
\begin{equation}\label{eq:texpH}
\mu^{kr} \circ \left(\Id \otimes \stau \otimes \cdots \otimes \stau^{(kr-1)}\right)\circ \Delta^{kr-1}= \varepsilon \cdot \Id
\end{equation}
or $\infty$ if no such $k$ exists.  When $\texp(H)$ is finite, we set
$\dt=r\texp(H)$.  We will denote the endomorphism on the left side of
\eqref{eq:texpH} by $\tG_{kr}(H)$ or simply $\tG_{kr}$ when $H$ is clear
from context.  More generally, the twisted exponent $\texp(V)$ of
an $H$-module $(V,\rho)$ is the smallest $k$ such that
\begin{equation}\label{eq:texpV}\rho(\mu^{kr} \circ (\Id \otimes (S^{-2}\tau) \otimes
  \cdots \otimes (S^{-2}\tau)^{kr-1}) \circ\Delta^{kr-1}(h))=\varepsilon(h)
  \cdot 1_V
\end{equation}
for all $h\in H$. 
\end{definition}
It is obvious that the twisted exponent of the regular representation
of $H$ coincides with $\texp(H)$.



If $S^2=\Id$, then the defining formula for the twisted exponent
reduces to $\tilde{h}^{[kr]}\cdot v=\varepsilon(h)v$ for all $h\in
H$ and $v \in V$.  In particular, this is the case when $H$ is
finite-dimensional, semisimple, and cosemisimple.

As a first example, we consider twisted exponents for group algebras.
If $G(H)$ is the group of group-like elements in $H$, then the
automorphism $\tau$ induces a norm map $N:G(H)\to G(H)$ given by
$N(g)=g\tau(g)\dots\tau^{r-1}(g)$.

\begin{proposition} \mbox{}\begin{enumerate}\item If $g\in G(H)$, then
    $\ord(N(g))$ divides $\texp(H)$.
\item If $G$ is a group, then  $\texp{\kk[G]}$ is the least common multiple
  of $\ord(N(g))$ for $g\in G$.
\end{enumerate}
\end{proposition}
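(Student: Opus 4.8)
The plan is to reduce both parts to a single computation: the value of the endomorphism $\tG_{kr}$ on a group-like element $g\in G(H)$. First I would record that $S^{-2}$ acts trivially on group-likes. Since $g^{-1}$ is again group-like, the antipode satisfies $S(g)=g^{-1}$, so $S^2(g)=S(g^{-1})=g$, and bijectivity of $S$ gives $S^{-2}(g)=g$. As $\tau$ preserves $G(H)$, the same holds for each $\tau^i(g)$, so an immediate induction yields $(\htau)^{i}(g)=\tau^{i}(g)$ for all $i\ge 0$. Using the group-like identity $\Delta^{kr-1}(g)=g\ot\cdots\ot g$ ($kr$ factors), the defining operator collapses to
\begin{equation*}
\tG_{kr}(g)=\mu^{kr}\bigl(g\ot\tau(g)\ot\cdots\ot\tau^{kr-1}(g)\bigr)=\prod_{i=0}^{kr-1}\tau^{i}(g).
\end{equation*}
Splitting this product into $k$ consecutive blocks of length $r$ and using that $\tau$ is an algebra homomorphism, the $j$-th block equals $\tau^{jr}(N(g))$, which is $N(g)$ because $\tau^r=\Id$. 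Hence
\begin{equation*}
\tG_{kr}(g)=N(g)^{k}.
\end{equation*}

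For part (1) I would simply specialize the defining identity. By definition $\texp(H)$ is the least $k$ with $\tG_{kr}(h)=\varepsilon(h)\,1_H$ for every $h\in H$; taking $h=g$ and using $\varepsilon(g)=1$ together with the computation above gives $N(g)^{\texp(H)}=1_H$, whence $\ord(N(g))\mid\texp(H)$. For part (2), with $H=\kk[G]$ I would exploit that the standard basis $G$ consists of group-like elements and that both $\tG_{kr}$ and $h\mapsto\varepsilon(h)\,1_H$ are linear. Two linear maps agree precisely when they agree on a basis, so the defining identity holds for a given $k$ if and only if $N(g)^{k}=1$ for every $g\in G$, i.e. if and only if $\ord(N(g))\mid k$ for all $g$. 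Writing $m=\lcm_{g\in G}\ord(N(g))$, the valid exponents $k$ are exactly the multiples of $m$, so the least one is $m$ itself and $\texp(\kk[G])=m$.

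The argument is essentially this one computation, so there is no serious obstacle; the two points needing care are the verification that $\htau$ restricts to $\tau$ on $G(H)$ and the regrouping that collapses $\prod_i\tau^i(g)$ to $N(g)^k$ via $\tau^r=\Id$. The only genuinely delicate bookkeeping is finiteness: part (1) is read vacuously when $\texp(H)=\infty$, and for infinite $G$ one interprets part (2) as asserting $\texp(\kk[G])=\infty$ exactly when the orders $\ord(N(g))$ are unbounded, which is consistent with the divisibility established in part (1).
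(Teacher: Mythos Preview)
Your proof is correct and follows essentially the same line as the paper's: both reduce to the computation $\tG_{kr}(g)=N(g)^k$ for group-like $g$ via $S^{-2}(g)=g$ and $\Delta^{kr-1}(g)=g^{\ot kr}$, then specialize to $k=\texp(H)$ for part~(1) and use that $G$ is a basis of $\kk[G]$ for part~(2). Your write-up is in fact slightly more explicit than the paper's in justifying the regrouping $\prod_{i=0}^{kr-1}\tau^i(g)=N(g)^k$ and in spelling out the linearity reduction for part~(2).
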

(Throughout the paper, we make the convention that any positive
integer divides $\infty$.)
\begin{proof} Since $g \in G(H)$, $\Delta(g)=g \otimes g$ and $S^2(g)=g$.
This implies that $S^{-2}\tau (g)=\tau(g)$, so that $(S^{-2}\tau)^{k}
(g)=\tau^{k}(g)$ for all $k \in \N$.  We now compute:
\begin{align*} 
N(g)^{\texp(H)}&=\mu^{\dt} ( g \otimes \tau(g) \otimes \cdots \otimes
\tau^{\dt-1}(g))\\
&=\mu^{\dt} \circ \left(\Id \otimes \stau \otimes \cdots \otimes \stau^{(\dt-1)}\right)\circ \Delta^{\dt-1}(g)=\varepsilon(g) \cdot 1=1.
\end{align*}
This implies that $\ord(N(g))$ divides $ \texp(H)$, as desired.

In the group algebra case, one needs only to observe that $\texp{\kk[G]}$
is the smallest $k$ such that $1=\tilde{g}^{[kr]}=N(g)^k$.
\end{proof}

The same proof shows that if $(V,\rho)$ is a representation of $G$,
then $\texp(V)$ is the least common multiple of $\ord(\rho(N(g)))$.

If $\tau$ is an involution of $G$ and $\texp(\kk[G])=1$, then it is
immediate that $\tau(g)=g^{-1}$ and $G$ is commutative.  A similar
result holds for Hopf algebras.

\begin{proposition}If $\dt\le 2$, then $H$ is commutative and
  cocommutative.  Moreover, $\tau$ is either the identity or the antipode.
\end{proposition}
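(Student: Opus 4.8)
The plan is to recognize the operator $\tG_{\dt}$ as an iterated convolution product in the algebra $\End(H)$ and then exploit the uniqueness of convolution inverses. Recall that for $f_1,\dots,f_n\in\End(H)$ one has $\mu^n\circ(f_1\otimes\cdots\otimes f_n)\circ\Delta^{n-1}=f_1*\cdots*f_n$, where $*$ is convolution and the unit is $u\varepsilon=\varepsilon(\cdot)\,1_H$. In these terms $\tG_{kr}=\Id*\stau*\stau^2*\cdots*\stau^{kr-1}$. Since $\dt=r\,\texp(H)$ is a positive integer bounded by $2$, only $\dt=1$ and $\dt=2$ can occur. If $\dt=1$, the defining relation reads $\Id=u\varepsilon$, forcing $h=\varepsilon(h)1$ for every $h$, so $H=\kk$; here $S=\Id=\tau$ and both assertions are immediate. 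Thus the substance of the proof is the case $\dt=2$.

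When $\dt=2$ the defining relation becomes $\Id*\stau=u\varepsilon$, i.e. $S^{-2}\tau$ is a right convolution inverse of $\Id_H$. But the antipode axioms say exactly that $S$ is a two-sided convolution inverse of $\Id_H$, and convolution inverses in the monoid $(\End(H),*)$ are unique: convolving $\Id*\stau=u\varepsilon$ on the left with $S$ and using $S*\Id=u\varepsilon$ together with associativity yields $S^{-2}\tau=S$, whence $\tau=S^{3}$.

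The heart of the argument is to extract (co)commutativity from the identity $\tau=S^{3}$. Here I would play the hypothesis that $\tau$ is a Hopf algebra automorphism---in particular an algebra and a coalgebra homomorphism---against the fact that $S$, being an anti-homomorphism, makes the odd power $S^3$ an algebra and coalgebra \emph{anti}-homomorphism. Comparing $\tau(ab)=\tau(a)\tau(b)$ with $\tau(ab)=S^3(ab)=S^3(b)S^3(a)=\tau(b)\tau(a)$ and using bijectivity of $\tau$ forces $H$ to be commutative; dually, comparing $\Delta\circ\tau=(\tau\otimes\tau)\circ\Delta$ with $\Delta\circ S^3=(S^3\otimes S^3)\circ\Delta^{\mathrm{op}}=(\tau\otimes\tau)\circ\Delta^{\mathrm{op}}$ forces $\Delta=\Delta^{\mathrm{op}}$, so $H$ is cocommutative. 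Once $H$ is commutative and cocommutative, the standard fact $S^2=\Id$ applies, and then $\tau=S^3=S$. Finally $r$ selects the alternative: if $r=1$ then $\tau=\Id$, and $\tau=S$ shows $S=\Id$ as well, while if $r=2$ then $\tau=S$ is a genuine involution distinct from the identity. I expect the main obstacle to be precisely this morphism/anti-morphism clash---ensuring the automorphism hypothesis on $\tau$ is used in full, on both products and coproducts, so that commutativity and cocommutativity are both obtained before one is entitled to invoke $S^2=\Id$, rather than assuming any of these properties prematurely.
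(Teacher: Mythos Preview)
Your proof is correct and follows essentially the same route as the paper: from $\Id*\stau=u\varepsilon$ and uniqueness of convolution inverses deduce $\tau=S^3$, then exploit the automorphism/anti-automorphism clash to obtain commutativity and cocommutativity, whence $S^2=\Id$ and $\tau=S$. The only difference is organizational---the paper disposes of the subcases $\dt=1$ and $(\dt,r)=(2,1)$ by observing $\tau=\Id$ and citing the untwisted result of Etingof--Gelaki, whereas you treat all cases uniformly (noting $\dt=1$ forces $H=\kk$), which is slightly more self-contained.
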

\begin{proof}
If $\dt=1$ or $\dt=2$ and $r=1$, then $\tau=\Id$, and the
conclusion follows from \cite[Proposition 2.2(6)]{EtingofGelaki:1999}.
 Now suppose that $\dt=2$ and $r=2$, so $\texp(H)=1$.  It follows that
 $h_1 (S^{-2}\tau ) h_2 = \varepsilon (h)1$ for all $h\in H$, so by
 definition of the antipode, $S^{-2}\tau=S$, i.e., $\tau=S^3$.  This
 implies that $\tau$ is an algebra and coalgebra antiautomorphism as
 well as a bialgebra automorphism.  Hence,
 $\tau(x)\tau(y)=\tau(xy)=\tau(y)\tau(x)$ for all $x,y\in H$.  Since
 $\tau$ is a bijection, $H$ is commutative.  Also,
 $\tau(h_1)\otimes\tau(h_2)=\Delta\tau(h)= \tau(h_2)\otimes\tau(h_1)$
 for any $h\in H$, and one obtains the cocommutativity of $H$ by
 applying $\tau^{-1}\otimes\tau^{-1}$ to this equation.  Note that in
 this case, since $S^2=\Id$, $\tau=S$.
\end{proof}

\subsection{Properties of the twisted exponent}

In this section, we derive twisted analogues of the basic properties
of the exponent.  

We first examine the relationship between $\texp(H)$ and $\texp(V)$
for $(V,\rho)\in\Rep(H)$.
\begin{proposition} \label{divide} Suppose that
    $\texp(V)$ is finite.  Then $k\in\N$ satisfies \eqref{eq:texpV} if
    and only if  $\texp(V)$ divides $k$.
  \end{proposition}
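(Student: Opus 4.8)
The plan is to identify the solution set $\mathcal S=\{k\in\N : \rho\circ\tG_{kr}=\varepsilon\cdot 1_V\}$ of \eqref{eq:texpV} with the set of positive multiples of $e:=\texp(V)$. By definition $e$ is the least element of $\mathcal S$, so it suffices to prove that $e\N\subseteq\mathcal S$ and that every $k\in\mathcal S$ is divisible by $e$. Throughout, I would write $\sigma:=S^{-2}\tau$ and, for $n\ge 0$, set $\tG_n=\mu^n\circ\sigma^{\otimes [0,n)}\circ\Delta^{n-1}$, so that \eqref{eq:texpV} for a given $k$ reads $\rho\circ\tG_{kr}=\varepsilon\cdot 1_V$. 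Both inclusions will be extracted from a single addition formula for the operators $\tG_n$.

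The first step is to establish the identity
\[
\tG_{m+n}(h)=\sum_{(h)}\tG_m(h_1)\,\sigma^m\!\bigl(\tG_n(h_2)\bigr)
\qquad(m,n\ge 0,\ h\in H).
\]
This comes straight from coassociativity: writing $\Delta^{m+n-1}=(\Delta^{m-1}\otimes\Delta^{n-1})\circ\Delta$, the first $m$ tensor legs multiply to $\tG_m(h_1)$, and, because $\sigma$ is an algebra homomorphism, the last $n$ legs multiply to $\sigma^m(\tG_n(h_2))$. The second step is a commutation lemma. Since $S^{-2}$ and $\tau$ are commuting bialgebra automorphisms of $H$, so is $\sigma$, whence $\sigma\circ\tG_n=\tG_n\circ\sigma$ and $\varepsilon\circ\sigma=\varepsilon$. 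In particular, if $\rho\circ\tG_n=\varepsilon\cdot 1_V$ for some $n$, then for every $j\in\N$ also
\[
\rho\circ\sigma^{j}\circ\tG_n=\rho\circ\tG_n\circ\sigma^{j}=(\varepsilon\circ\sigma^{j})\cdot 1_V=\varepsilon\cdot 1_V .
\]

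Granting these, the inclusion $e\N\subseteq\mathcal S$ follows by induction on $l$, the base case $l=1$ being the finiteness of $e$: applying $\rho$ to the addition formula with $m=er$, $n=ler$ and using $e\in\mathcal S$ to replace $\rho\tG_{er}(h_1)$ by $\varepsilon(h_1)1_V$, the counit axiom collapses the sum to $\rho\circ\sigma^{er}\circ\tG_{ler}$, which equals $\varepsilon\cdot 1_V$ by the commutation lemma and the hypothesis $le\in\mathcal S$, giving $(l+1)e\in\mathcal S$. For the converse, let $k\in\mathcal S$ and write $k=qe+s$ with $0\le s<e$; since $k\ge e$ we have $q\ge 1$. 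Feeding $m=qer$, $n=sr$ into the addition formula, applying $\rho$, and using $qe\in\mathcal S$ to collapse the first factor gives $\rho\circ\sigma^{qer}\circ\tG_{sr}=\rho\circ\tG_{kr}=\varepsilon\cdot 1_V$; since $\sigma^{qer}$ is invertible and counit-preserving, the commutation lemma upgrades this to $\rho\circ\tG_{sr}=\varepsilon\cdot 1_V$. Were $s$ positive, this would place $s$ in $\mathcal S$ below $e=\min\mathcal S$, a contradiction; hence $s=0$ and $e\mid k$.

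The point requiring the most care is the twist $\sigma^{m}$ in the addition formula: one would like $\tG_{m+n}$ to factor cleanly through $\tG_m$ and $\tG_n$, but the inner operator is shifted by $\sigma^{m}$, which for the relevant $m\in r\N$ equals $S^{-2m}$ (as $\tau^{m}=\Id$) and is the identity only when $S^2=\Id$. The commutation lemma is exactly what makes this shift harmless, since $\sigma$ commutes with every $\tG_n$ and fixes the counit; this is the mechanism that drives the inductive collapse in both directions. Everything else is a routine application of the Hopf algebra axioms.
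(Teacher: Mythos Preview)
Your proof is correct and follows essentially the same approach as the paper: both arguments rest on the decomposition $\tG_{m+n}(h)=\sum_{(h)}\tG_m(h_1)\,\sigma^m(\tG_n(h_2))$ together with the fact that the bialgebra automorphism $\sigma=S^{-2}\tau$ commutes with each $\tG_n$ and preserves the counit. The paper carries this out as a single chain of equalities that peels off $t$ blocks of length $er$ at once, whereas you isolate the addition formula and commutation lemma explicitly and argue by induction, but the underlying mechanism is identical.
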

\begin{proof}  Set $e=\texp(V)$.   Note that for any $\ell\ge 0$, 
\begin{equation*}\begin{aligned} 
\rho(\mu^{er}\circ(\htau)^{\otimes[\ell,er+\ell)}&\circ\Delta^{er-1}(h))\\&=\rho(\mu^{er}\circ(\htau)^{\otimes[0,er)}((\htau)^\ell(h_1)\ot\dots\ot(\htau)^\ell(h_{er}))\\
&=\rho(\mu^{er}\circ(\htau)^{\otimes[0,er)}((\htau)^\ell(h)_1\ot\dots\ot(\htau)^\ell(h)_{er})\\
&=\eps((\htau)^\ell(h))1_V=\eps(h)1_V.
\end{aligned}
\end{equation*}
Now, suppose that $k$ satisfies \eqref{eq:texpV}, and write $k=et+\ell$
with $0\le \ell<e$.  Applying the previous observation repeatedly, we obtain
\begin{align*}
&\eps((\htau)^{ter}(h))1_V=\eps(h)1_V=\rho(\mu^{kr}\circ(\htau)^{\otimes[0,kr)}\circ\Delta^{kr-1}(h))\\&=\rho(\mu^{kr}\circ((\htau)^{\ot[0,er)}\ot \ldots \ot (\htau)^{\ot[(t-1)er,ter)}\ot(\htau)^{\ot[ter,(te+\ell)r)})\circ\Delta^{kr-1}(h))\\
&\begin{multlined}=\left(\prod_{i=1}^t
\rho(\mu^{er}\circ(\htau)^{\otimes[(i-1)er,ier)}(h_{(i-1)er+1}\ot\dots\ot
  h_{ier})\right)\\
\rho(\mu^{\ell r}\circ(\htau)^{\otimes[ter,(te+\ell)r)}(h_{ter+1}\ot\dots\ot
  h_{kr}))
\end{multlined}
\\
&=\left(\prod_{i=1}^t \eps(h_i)\right)\rho(\mu^{\ell
  r}\circ(\htau)^{\otimes[ter,(te+\ell)r))}(h_{t+1}\ot\dots\ot
  h_{t+\ell r}))\\
&=\rho(\mu^{\ell r}\circ(\htau)^{\otimes[0,\ell
  r)}\circ(\htau)^{\otimes ter}(\Delta^{\ell r-1}(h)))\\
&=\rho(\mu^{\ell r}\circ(\htau)^{\otimes[0,\ell
  r)}(\Delta^{\ell r-1}((\htau)^{ter}(h))).
\end{align*}
It follows that $\ell<e$ satisfies \eqref{eq:texpV}, so $\ell=0$.

The same computation with $k=et$ gives the reverse implication.
\end{proof}

\begin{corollary}\label{lcmV} The twisted exponent $\texp(H)$ is the least common multiple of
  $\texp(V)$ for $V\in\Rep(H)$.
\end{corollary}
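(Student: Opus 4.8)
The plan is to establish the two divisibility relations $\lcm_{V}\texp(V)\mid\texp(H)$ and $\texp(H)\mid\lcm_{V}\texp(V)$ separately; together with the convention that every positive integer divides $\infty$, these force the claimed equality.

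For the first relation, I would show that $\texp(V)$ divides $\texp(H)$ for each $V\in\Rep(H)$. If $\texp(H)=\infty$ there is nothing to prove, so suppose it is a finite integer $e$. Then \eqref{eq:texpH} holds with $k=e$, that is, $\tG_{er}=\varepsilon\cdot\Id$ as an endomorphism of $H$. Applying $\rho$ to this operator identity shows that $k=e$ satisfies the module-level condition \eqref{eq:texpV} for $V$; in particular $\texp(V)\le e$, so that $\texp(V)$ is finite and Proposition \ref{divide} applies, yielding $\texp(V)\mid e=\texp(H)$. Passing to the least common multiple over all $V\in\Rep(H)$ then gives $\lcm_{V}\texp(V)\mid\texp(H)$.

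For the reverse relation, I would invoke the observation recorded just after Definition \ref{def:texpH} that $\texp(H)$ coincides with the twisted exponent of the regular representation, which is itself an object of $\Rep(H)$. Since any term of a least common multiple divides that least common multiple, $\texp(H)$ divides $\lcm_{V}\texp(V)$. When $\texp(H)=\infty$ this reads as $\infty$ appearing among the terms, which forces the least common multiple to equal $\infty$ as well, so the equality persists. Combining the two divisibilities gives $\texp(H)=\lcm_{V}\texp(V)$.

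The computations here are light; the only point requiring care is the first direction, where Proposition \ref{divide} may be applied only after one has verified that $\texp(V)$ is finite. This is precisely what pushing the identity $\tG_{er}=\varepsilon\cdot\Id$ through $\rho$ supplies, since it exhibits $e$ as an integer satisfying \eqref{eq:texpV} and hence bounds $\texp(V)$ from above. The mild subtlety concerning the case $\texp(H)=\infty$ is dispatched by the stated convention.
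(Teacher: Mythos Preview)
Your proof is correct and follows essentially the same approach as the paper: both directions rely on Proposition~\ref{divide} for $\texp(V)\mid\texp(H)$ and on the identification of $\texp(H)$ with the twisted exponent of the regular representation for the reverse divisibility. Your write-up is in fact slightly more careful than the paper's, since you explicitly check that $\texp(V)$ is finite before invoking Proposition~\ref{divide} and you spell out the $\texp(H)=\infty$ case.
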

\begin{proof}  Assume that $\texp(H)$ is finite, as otherwise there
  is nothing to prove. 
It is trivial that $\texp(H)$ divides $\lcm\{\texp(V)\mid
V\in\Rep(H)\}$.
Conversely, since $\texp(H)$ satisfies
 \eqref{eq:texpV}, the proposition implies that
  $\texp(V)|\texp(H)$ for each $V$, i.e. $ \lcm\{\texp(V)\mid V\in\Rep(H)\}$ divides  $\texp(H)$. Thus, 
  $\texp(H)= \lcm\{\texp(V)\mid V\in\Rep(H)\}$.
\end{proof}

We next consider the behavior of the twisted exponent under various
algebraic operations.

\begin{proposition} \mbox{}
\begin{enumerate}

\item Let $A \subseteq H$ be a Hopf subalgebra such that
  $\tau(A)=A$,
  and suppose that $r'=\ord(\tau|_A)$ (so $r'\mid r$).  Then, $\texp(A)$ divides
  $\texp(H)\frac{r}{r'}$.  In particular, if $r=r'$, then
  $\texp(A)\mid\texp(H)$.
\item \label{quotient} Let $\phi:H\to H'$ be a surjective map of Hopf algebras with
  $\tau(\ker(\phi))=\ker(\phi)$, and let $\tau'$ be the Hopf algebra
  automorphism of $H'$ defined by $\tau'(\phi(h))=\phi(\tau(h))$.
  Then, $\exp_{\tau'}(H')$ divides $\texp(H)\frac{r}{r'}$.
\item $K$ is a field extension of $\kk$, then $\texp(H \otimes_{\kk}
  K)= \texp(H)$.
\item If $H$ is finite-dimensional, then $\exp_{\tau^*}(H^*)=\texp(H)$.
\end{enumerate}
\end{proposition}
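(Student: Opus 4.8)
The plan is to show that, under the canonical identification of $\End(H)$ with $\End(H^*)$ by transposition, the defining endomorphism $\tG_{kr}(H)$ of $H$ is carried exactly to the analogous endomorphism $\Gamma^{\tau^*}_{kr}(H^*)$ of $H^*$. Since this will make the condition $\tG_{kr}(H)=\varepsilon\cdot\Id$ equivalent to $\Gamma^{\tau^*}_{kr}(H^*)=\varepsilon\cdot\Id$ \emph{for every} $k$, the two smallest valid $k$ — namely $\texp(H)$ and $\exp_{\tau^*}(H^*)$ — must coincide (this also handles the case where both are $\infty$). First I would recall the standard dual Hopf structure on $H^*$, available since $H$ is finite-dimensional: under the natural isomorphisms $(H^{\ot m})^*\cong(H^*)^{\ot m}$, the multiplication is $\mu_{H^*}=\Delta^*$, the comultiplication is $\Delta_{H^*}=\mu^*$, the antipode is $S_{H^*}=S^*$, the unit is $\varepsilon$, and the counit is $\varepsilon_{H^*}(f)=f(1_H)$. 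I would also note that $\tau^*$ (given by $f\mapsto f\circ\tau$) is a Hopf algebra automorphism of $H^*$, that it is the $\tau'$ implicitly used in forming $\exp_{\tau^*}$, and that $(\tau^*)^j=(\tau^j)^*$ together with the injectivity of transposition shows $\ord(\tau^*)=r$; thus the integer $r$ entering both twisted-exponent definitions is the same.

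The conceptual core is the identity
\begin{equation*}
\left((\htau)^{\ot[0,kr)}\right)^*=\left(S_{H^*}^{-2}\tau^*\right)^{\ot[0,kr)}.
\end{equation*}
This rests on the single-factor claim $(\htau)^*=S_{H^*}^{-2}\tau^*$, which follows from contravariance, $(S^{-2}\tau)^*=\tau^*\circ(S^{-2})^*$, from $(S^{-2})^*=(S^*)^{-2}=S_{H^*}^{-2}$ (using that $S$ is bijective), and from the hypothesis that $\tau$ commutes with $S$, whence $\tau^*$ commutes with $S_{H^*}$. Transposing a tensor product of endomorphisms factorwise introduces no reversal of order under the identification $(H^{\ot kr})^*\cong(H^*)^{\ot kr}$, so $((\htau)^j)^*=((\htau)^*)^j=(S_{H^*}^{-2}\tau^*)^j$ slots into position $j$ exactly as required.

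Assembling the pieces, contravariance gives $\tG_{kr}(H)^*=(\Delta^{kr-1})^*\circ\left((\htau)^{\ot[0,kr)}\right)^*\circ(\mu^{kr})^*$, and substituting $(\Delta^{kr-1})^*=\mu_{H^*}^{kr}$, $(\mu^{kr})^*=\Delta_{H^*}^{kr-1}$, together with the displayed identity, yields $\tG_{kr}(H)^*=\Gamma^{\tau^*}_{kr}(H^*)$; the interchange $\Delta\leftrightarrow\mu$ forced by transposition is undone precisely because $\Delta^*=\mu_{H^*}$ and $\mu^*=\Delta_{H^*}$, restoring the same product-outside, coproduct-inside shape. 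It remains to check that the target is self-dual: the transpose of $h\mapsto\varepsilon(h)1_H$ sends $f$ to $f\mapsto f(1_H)\varepsilon$, and since $f(1_H)=\varepsilon_{H^*}(f)$ while $\varepsilon=1_{H^*}$, this is $\varepsilon_{H^*}\cdot\Id_{H^*}$. Hence $\tG_{kr}(H)=\varepsilon\cdot\Id$ if and only if $\Gamma^{\tau^*}_{kr}(H^*)=\varepsilon\cdot\Id$, and the equality $\exp_{\tau^*}(H^*)=\texp(H)$ follows.

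I expect the only real obstacle to be bookkeeping rather than ideas: one must verify that the contravariance of transposition interacts with the iterated product and coproduct so as to reproduce $\mu_{H^*}^{kr}\circ(\cdots)\circ\Delta_{H^*}^{kr-1}$ rather than a version with factors reversed, and confirm that the identification $(H^{\ot kr})^*\cong(H^*)^{\ot kr}$ introduces no permutation of tensor slots that would break the match between $(\htau)^{\ot[0,kr)}$ and its dual. Once $(S^{-2})^*=S_{H^*}^{-2}$ and $[\tau,S]=0$ are in hand, the substantive step — that the dual twist of $S^{-2}\tau$ is the corresponding twist $S^{-2}\tau^*$ on $H^*$ — is short.
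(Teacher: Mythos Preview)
Your proposal treats only part~(4) and says nothing about parts (1)--(3); those still need arguments (the paper handles them by restricting $\tG_{er}$ to $A$, pushing it through $\phi$, and observing that extension of scalars changes nothing, respectively). For part~(4) itself, your argument is correct and is precisely the paper's approach in expanded form: the paper asserts in one line that $\tG_{kr}(H)^*=\Gamma^{\tau^*}_{kr}(H^*)$ and concludes, while you supply the verification that transposition swaps $\mu$ and $\Delta$, respects the tensor factors, carries $S^{-2}\tau$ to $S_{H^*}^{-2}\tau^*$, and sends $\varepsilon\cdot\Id_H$ to $\varepsilon_{H^*}\cdot\Id_{H^*}$.
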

\begin{proof}
  Set $e=\texp(H)$.  To prove the first statement, it suffices by
  Proposition~\ref{divide} to show that $\tG_{er}(A)=\eps \Id_A$.
  This is clear, since $\tG_{er}(A)=\tG_{er}(H)|_A$.  The proof of
  \eqref{quotient} is similar, using the fact that
  $\Gamma^{\tau'}_{er}(H')\circ
  \phi=\phi\circ\tG_{er}(H)=\phi\circ(\eps \Id_H)=\eps' \Id_{H'}$.
  The fact that the twisted exponent is preserved by extension of
  scalars in obvious.  Finally, if $H$ is finite-dimensional,
  $\tG_{kr}(H)^*=\Gamma^{\tau^*}_{kr}(H^*)$, which implies that
  $\exp_{\tau^*}(H^*)=\texp(H)$.
\end{proof}

Let $H'$ be another Hopf algebra with bijective antipode endowed with
an automorphism $\tau'$ of finite order $r'$.  Then
$\gamma=\tau\ot\tau'$ is an automorphism of $H\ot H'$ with order
$\lcm(r,r')$, and $\exp_{\gamma}(H \otimes H')$ is defined.
\begin{proposition} With $\gamma$ as above, 
  \begin{equation*}\exp_{\gamma}(H \otimes H')=\frac{\lcm(\dt, d_{\tau'})}{\lcm(r,
    r')}.
\end{equation*}
In particular, if $\tau$ and $\tau'$ have the same order, then
$\exp_{\gamma}(H \otimes H')=\lcm(\exp_{\tau}(H), \exp_{\tau'}(H'))$.
\end{proposition}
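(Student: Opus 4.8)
The plan is to reduce the statement to a purely numerical divisibility computation by exhibiting the defining endomorphism of $H\ot H'$ as a tensor product of the corresponding endomorphisms of $H$ and $H'$. For any $n\ge 1$, write $\Gamma^\tau_n:=\mu^n\circ(\htau)^{\otimes[0,n)}\circ\Delta^{n-1}$ (extending the notation of Definition~\ref{def:texpH}, which used $n=kr$), and similarly $\Gamma^{\tau'}_n$ and $\Gamma^\gamma_n$. The central computation is the factorization
\[
\Gamma^\gamma_n(H\ot H')=\Gamma^\tau_n(H)\ot\Gamma^{\tau'}_n(H').
\]
To prove it, I would unwind the structure maps of the tensor-product Hopf algebra: the iterated coproduct gives $\Delta^{n-1}_{H\ot H'}(h\ot h')=\sum (h_1\ot h'_1)\ot\dots\ot(h_n\ot h'_n)$, the relevant twisting endomorphism factors as $(S\ot S')^{-2}(\tau\ot\tau')=\htau\ot\widehat{\tau'}$ (writing $\widehat{\tau'}=(S')^{-2}\tau'$), and the iterated product of $H\ot H'$ separates the two tensor slots. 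Carrying this out is routine bookkeeping, provided one is careful about the swap in the coproduct of $H\ot H'$.

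Next, since $\gamma$ has order $R:=\lcm(r,r')$, I would evaluate the factorization at $n=kR$ and determine when $\Gamma^\gamma_{kR}(H\ot H')$ equals $(\eps\ot\eps')\cdot\Id$, that is, the map $x\mapsto(\eps\ot\eps')(x)\cdot 1_{H\ot H'}$. The ``if'' direction is immediate: if the two tensor factors equal $\eps\cdot\Id$ and $\eps'\cdot\Id$, their product is $(\eps\ot\eps')\cdot\Id$. For the converse I expect the one genuinely delicate point: a tensor product $A\ot B$ of operators determines $A$ and $B$ only up to a reciprocal scalar, so $\Gamma^\tau_{kR}(H)\ot\Gamma^{\tau'}_{kR}(H')=(\eps\cdot\Id)\ot(\eps'\cdot\Id)$ does not, on its face, force each factor. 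I would resolve this by evaluating at the unit: since $\htau(1)=1$ one has $\Gamma^{\tau'}_{kR}(H')(1_{H'})=1_{H'}\neq 0$, and feeding $h\ot 1_{H'}$ into both sides forces $\Gamma^\tau_{kR}(H)(h)=\eps(h)1_H$ for all $h$ (the map $-\ot 1_{H'}$ is injective), and symmetrically for the other factor. Hence $\Gamma^\gamma_{kR}(H\ot H')=(\eps\ot\eps')\cdot\Id$ if and only if $\Gamma^\tau_{kR}(H)=\eps\cdot\Id$ and $\Gamma^{\tau'}_{kR}(H')=\eps'\cdot\Id$.

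Finally I would translate this into divisibility using Proposition~\ref{divide}. Since $r\mid R$, the integer $kR$ is a multiple of $r$, and Proposition~\ref{divide} (for the regular representation) gives $\Gamma^\tau_{kR}(H)=\eps\cdot\Id\iff\dt\mid kR$, and likewise $\Gamma^{\tau'}_{kR}(H')=\eps'\cdot\Id\iff d_{\tau'}\mid kR$. Thus $\exp_\gamma(H\ot H')$ is the least $k$ with $\lcm(\dt,d_{\tau'})\mid kR$. Writing $L=\lcm(\dt,d_{\tau'})$, I would observe that $r\mid\dt\mid L$ and $r'\mid d_{\tau'}\mid L$ force $R=\lcm(r,r')\mid L$, so $L\mid kR\iff(L/R)\mid k$, whence the least such $k$ is $L/R=\lcm(\dt,d_{\tau'})/\lcm(r,r')$, proving the formula; the case in which some factor exponent is infinite is covered by the standing convention that every positive integer divides $\infty$. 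The ``in particular'' statement then follows from $\dt=r\,\texp(H)$ and $d_{\tau'}=r\,\exp_{\tau'}(H')$ when $r=r'$, together with the identity $\lcm(ra,rb)=r\lcm(a,b)$.
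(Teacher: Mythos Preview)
Your argument is correct and follows the same route as the paper: factor $\Gamma^\gamma_n=\Gamma^\tau_n\ot\Gamma^{\tau'}_n$, reduce to the simultaneous divisibility conditions $\dt\mid kR$ and $d_{\tau'}\mid kR$, and conclude $d_\gamma=\lcm(\dt,d_{\tau'})$. You supply considerably more detail than the paper does---in particular, your treatment of the ``only if'' direction (evaluating at $h\ot 1_{H'}$) and the observation $R\mid L$ are stated explicitly, whereas the paper's proof leaves these implicit.
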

\begin{proof} By definition of the tensor product, 
  $\Gamma^\gamma_t=\tG_t\ot\Gamma^{\tau'}_t$ for all $t\in\N$.
  Moreover, $\Gamma^\gamma_{k\ord(\gamma)}=\eps_{H\ot H'} \Id_{H\ot
    H'}$ if and only if $\tG_{k\ord(\gamma)}=\eps \Id_H$ and
  $\Gamma^{\tau'}_{k\ord(\gamma)}=\eps' \Id_{H'}$, so this condition
  holds if and only if $\dt$ and $d_{\tau'}$ both divide
  $k\ord(\gamma)$.  Hence, $d_\gamma=\lcm(\dt,d_{\tau'})$, and
  $\exp_{\gamma}(H \otimes H')$ is obtained by dividing this by
  $\ord(\gamma)$.

\end{proof}

\subsection{The case of $H$ finite-dimensional and involutory}\label{sec:inv}
Assume that $H$ is finite-dimensional and involutory. It is proved
in~\cite{KashinaSommerZhu:2006} that the exponent of any module $V$ is
given by the order of the action on $V\ot H^*$ of a certain element of
$H\ot H^*$.  We show that the twisted exponent can be computed in a
similar way.

Let $\coev:\kk\to H\ot H^*$ be the coevaluation map.  Recall that if
$b_1, \dots, b_n $ is a basis of $H$ with dual basis $b_1^*, \dots,
b_n^*$, then $\coev(1)=\sum b_i\ot b^*_i$.  We now define $\qt\in H\ot
H^*$ via $\qt=\prod_{i=0}^{r-1} (\Id_H\ot (\tau^{*})^{i})\coev(1)$.  More explicitly,  
	\begin{equation}\label{qt}
	\qt = \sum{b_{i_1}\cdots b_{i_r}\otimes b^*_{i_1}\tau^*(b^*_{i_2})\cdots (\tau^*)^{(r-1)}(b^*_{i_r})}. 
	\end{equation} 
        We denote by $\qt|_{V \otimes H^*}$ the action of $\qt$ on
        $V\ot H^*$ by left multiplication.


		\begin{proposition}\label{ordofq}  The order of
                  $\qt|_{V \otimes H^*}$ equals $\texp(V)$.
		\end{proposition}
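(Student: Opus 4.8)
The plan is to understand the action of $\qt$ on $V \otimes H^*$ explicitly and show that its $k$-th power being the identity is exactly condition \eqref{eq:texpV}. First I would fix the natural identification $H \otimes H^* \cong \End(H)$ (valid since $H$ is finite-dimensional), under which left multiplication on the second tensor factor $H^*$ corresponds to a concrete operator. Since the problem reduces to understanding powers of $\qt$, my first concrete task is to compute $\qt^k$ in $H \otimes H^*$. Because $\qt = \prod_{i=0}^{r-1}(\Id_H \otimes (\tau^*)^i)\coev(1)$ is built from $r$ copies of the coevaluation twisted by powers of $\tau^*$, I expect that $\qt^k$ unwinds into a product of $kr$ twisted coevaluations, giving an expression of the form $\sum b_{i_1}\cdots b_{i_{kr}} \otimes b^*_{i_1}\tau^*(b^*_{i_2})\cdots(\tau^*)^{kr-1}(b^*_{i_{kr}})$. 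The key computational identity is that the dual-side product of a twisted coevaluation reproduces, under the $H \otimes H^* \cong \End(H)$ identification, the twisted Sweedler-power endomorphism $\tG_{kr}$ appearing in \eqref{eq:texpH}; here is where I would use that $H$ is involutory so that $S^2 = \Id$ and $\stau = \tau$, matching the simplified form of the twisted exponent.

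The second step is to transport this through the $V$-action. Writing $\rho$ for the action of $H$ on $V$, the operator $\qt|_{V \otimes H^*}$ acts by $(\rho \otimes \Id_{H^*})$ applied to left multiplication by $\qt$. After computing $\qt^k$ as above, the claim that $\qt^k|_{V \otimes H^*} = \Id$ should translate, via the duality $V \otimes H^* \cong \Hom(H, V)$ and evaluation against arbitrary $h \in H$, precisely into the statement that $\rho(\tG_{kr}(h)) = \eps(h)\,1_V$ for all $h$, which is exactly \eqref{eq:texpV}. The mechanism is that pairing the $H^*$-factor of $\qt^k$ against an element of $H$ applies the iterated comultiplication $\Delta^{kr-1}$ and the twists $(\tau^*)^j$ dualize to $\tau^j$ on $H$, while the $H$-factor carries the iterated multiplication $\mu^{kr}$.

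Consequently, the order of $\qt|_{V \otimes H^*}$ is the smallest $k$ such that $\rho(\tG_{kr}(h)) = \eps(h)\,1_V$ for all $h \in H$, which is by Definition~\ref{def:texpH} exactly $\texp(V)$. To make the "smallest such $k$" matching rigorous rather than merely showing the two conditions are equivalent for each fixed $k$, I would invoke Proposition~\ref{divide}: both $\{k : \qt^k|_{V\otimes H^*} = \Id\}$ and $\{k : k \text{ satisfies } \eqref{eq:texpV}\}$ are closed under taking multiples and agree, so their minimal elements coincide. The main obstacle I anticipate is purely bookkeeping: carefully tracking the order in which the $b_{i_j}$ and the twists $(\tau^*)^j$ appear when expanding $\qt^k$, and confirming that the transpose/antipode conventions make the twists align as $\tau^j$ (not $\tau^{-j}$ or some shifted index) on the $H$-side after dualizing. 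Getting this index alignment exactly right — and verifying it is consistent with the involutory simplification $\stau = \tau$ — is the delicate part; once the expansion of $\qt^k$ is pinned down, the identification with $\tG_{kr}$ and hence with $\texp(V)$ is essentially forced.
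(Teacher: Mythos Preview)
Your proposal is correct and follows essentially the same route as the paper: expand $\qt^k$ explicitly, then evaluate the $H^*$-factor against arbitrary $h \in H$ (the paper phrases this as composing with $\Id_V \otimes \ev_h$ and using that left multiplication by $\qt^k$ commutes with the right $H^*$-action to reduce to $\alpha = \eps$) to translate $\qt^k|_{V\otimes H^*} = \Id$ into $\tilde{h}^{[kr]}\cdot v = \eps(h)v$, which under the involutory hypothesis is exactly \eqref{eq:texpV}. Your appeal to Proposition~\ref{divide} at the end is unnecessary---once you have established that the two conditions are equivalent for each fixed $k$, the sets of admissible $k$ coincide and hence so do their minima.
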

\begin{proof}
Given $k\in\N$, let $m=kr$, so that 
\begin{equation*}
\qt^k= \sum{b_{i_1}\cdots b_{i_m}\otimes b^*_{i_1}\tau^*(b^*_{i_2})\cdots (\tau^*)^{(m-1)}(b^*_{i_m})}.
\end{equation*}
Note that $f\in\End(V\ot H^*)$ is uniquely determined by the maps
$(\Id_V\ot\ev_h)\circ f$.  Thus, the order of $\qt$ is the smallest
$k$ such that $(\Id_V\ot\ev_h)(\qt^k\cdot (v\ot\alpha))=v\ot\alpha$ for
all $v\in V$, $h\in H$, and $\alpha\in H^*$.  Since left
multiplication by $\qt^k$ commutes with the right $H^*$-action, it
suffices to consider $\alpha=\eps$.  Using basic properties of dual
bases, we obtain
\begin{equation*}\begin{split}
(\Id_V &\otimes \ev_h)(\qt^k\cdot(v\ot\eps))\\
 & = \sum b^*_{i_1}(h_1)((\tau^*)(b^*_{i_2}))(h_2)\cdots ((\tau^*)^{(m-1)}(b^*_{i_m}))(h_m)\eps(h_{m+1})(b_{i_1}\cdots
  b_{i_m}\cdot v) \\
&=\sum b^*_{i_1}(h_1)b^*_{i_2}\left(\tau(h_2)\right)\cdots
b^*_{i_m}(\tau^{(m-1)}(h_m)) b_{i_1}\cdots b_{i_m}\cdot v  \\
&=\sum h_1\tau(h_2)\cdots \tau^{(m-1)}(h_m) \cdot
v=\tilde{h}^{[kr]}\cdot v.
\end{split}
\end{equation*}
Since $(\Id_V\ot\ev_h)(\Id_{V\ot H^*}(v\ot\eps))=\eps(h)v$, we conclude
that the order of $\qt|_{V \otimes H^*}$ is the minimum $k$ such that
$\tilde{h}^{[kr]}\cdot v=\eps(h)v$ for all $h$ and $v$.  However, when
$H$ is involutory, this is the definition of $\texp(V)$.

\end{proof}		
\section{Twisted FS indicators and twisted
  exponents}

In this section, we assume that $\kk$ is an algebraically closed field
of characteristic zero and that $H$ is a semisimple Hopf algebra.  In
particular, $H$ is finite-dimensional, so $S$ is bijective; moreover,
a theorem of Radford states that any Hopf algebra automorphism has
finite order~\cite{Radford:1990}.   We denote by $\Lambda$ the unique
(two-sided) integral of $H$ such that $\eps(\Lambda)=1$.

Let $V$ be a representation of $H$ with character $\chi$.  Then, for
any $m\in\N$ divisible by $r$, the $m$th twisted Frobenius-Schur
indicator was defined in \cite{SageVega:2012} to be the character sum
\begin{equation*}
 \nu_m(\chi,\tau)=\chi\left(\tilde{\Lambda}^{[m]}\right).
 \end{equation*}
We usually will write $\tnu_m(\chi)$ instead of $\nu_m(\chi,\tau)$
when the automorphism is clear from context.
This generalizes both the FS indicators defined for semisimple Hopf
algebras by Linchenko and Montgomery~\cite{LinMont:2000} and the
twisted FS indicators for groups introduced by Bump and
Ginzburg~\cite{BumpGinz:2004}.

The $m$th twisted FS indicator can be computed as the trace of a
certain order $m$ operator~\cite{SageVega:2012}.  This implies that
$\tnu_m(\chi)\in\Z[\zeta_m]$, where $\zeta_m$ is a primitive $m$th
root of unity.  A priori, this allows the possibility that the field
extension of $\Q$ generated by all the $\tnu_m(\chi)$'s is infinite.
However, this is not the case when the twisted exponent is
finite.  Indeed, we now exhibit another trace formula for
$\tnu_m(\chi)$, which can be used to show that the function $m\mapsto
\tnu_m(\chi)$ is $r\texp(V)$-periodic with image lying in
$\Z[\zeta_{\texp(V)}]$.





Let $\qt$ be the element of $H\ot H^*$ defined in \eqref{qt}.

\begin{theorem}\label{2ndformula}  If $m=kr$, then the $m$th twisted
Frobenius-Schur indicator is given by
\begin{equation*}\tnu_m(\chi)=\dfrac{1}{\dim H}\tr\left(\qt^k|_{V\otimes H^*}\right).
\end{equation*}
\end{theorem}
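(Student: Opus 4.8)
The plan is to expand the trace of $\qt^k$ acting on $V\ot H^*$ as a sum of products of a trace on $V$ and a trace on $H^*$, to evaluate the $H^*$-trace using semisimplicity, and then to collapse the resulting multiple sum with the dual-basis reconstruction identity. Writing $\qt^k=\sum b_{i_1}\cdots b_{i_m}\ot b_{i_1}^*\tau^*(b_{i_2}^*)\cdots(\tau^*)^{m-1}(b_{i_m}^*)$ as in the proof of Proposition~\ref{ordofq}, and noting that $\qt^k$ acts on $V\ot H^*$ as the sum of the operators $\rho(b_{i_1}\cdots b_{i_m})\ot L_{b_{i_1}^*\tau^*(b_{i_2}^*)\cdots(\tau^*)^{m-1}(b_{i_m}^*)}$, where $L_\xi$ denotes left multiplication by $\xi$ on $H^*$, multiplicativity of the trace on tensor products gives
\[
\tr(\qt^k|_{V\ot H^*})=\sum_{i_1,\dots,i_m}\chi(b_{i_1}\cdots b_{i_m})\,\tr\!\left(L_{b_{i_1}^*\tau^*(b_{i_2}^*)\cdots(\tau^*)^{m-1}(b_{i_m}^*)}\right).
\]

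The key input, and the step I expect to require the most care, is the evaluation of $\tr(L_\xi)$ for $\xi\in H^*$. Since $H$ is semisimple over a field of characteristic zero, Larson--Radford gives $S^2=\Id$ together with the cosemisimplicity of $H$, so that $H^*$ is again semisimple; the character of its left regular representation is $(\dim H)$ times the normalized two-sided integral of $(H^*)^*\cong H$. Concretely, I would prove the identity $\tr(L_\xi)=(\dim H)\,\xi(\Lambda)$ directly from the dual-basis formula $\tr(L_\xi)=\sum_i(\xi b_i^*)(b_i)=\xi(F)$, where $F=\sum_i(\Id\ot b_i^*)(\Delta b_i)\in H$, and then identify $F=(\dim H)\Lambda$ using that $F$ represents the regular character of $H^*$ and that the integral is normalized by $\eps(\Lambda)=1$ (the normalization is confirmed on $\xi=\eps$, where $L_\eps=\Id$ forces $\tr(L_\eps)=\dim H$). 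This is the only place semisimplicity enters.

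Granting this, the remaining computation is routine. Substituting $\tr(L_\xi)=(\dim H)\xi(\Lambda)$ and pairing the $H^*$-factor with $\Delta^{m-1}\Lambda$ gives
\[
\tr(\qt^k|_{V\ot H^*})=(\dim H)\sum_{i_1,\dots,i_m}\chi(b_{i_1}\cdots b_{i_m})\sum_{(\Lambda)}\prod_{j=1}^m b_{i_j}^*(\tau^{j-1}(\Lambda_j)),
\]
where I have used $(\tau^*)^{j-1}(b^*)(h)=b^*(\tau^{j-1}(h))$, valid because $\tau^*$ is the dual Hopf automorphism. For each fixed $j$ the reconstruction identity $\sum_{i_j}b_{i_j}^*(\tau^{j-1}(\Lambda_j))\,b_{i_j}=\tau^{j-1}(\Lambda_j)$ collapses the $i_j$-sum, and linearity of $\chi$ then yields $\sum_{(\Lambda)}\chi(\Lambda_1\tau(\Lambda_2)\cdots\tau^{m-1}(\Lambda_m))=\chi(\tilde{\Lambda}^{[m]})=\tnu_m(\chi)$. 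Dividing by $\dim H$ gives the asserted formula. The twist enters only through the powers of $\tau$ produced by $\tau^*$, so the argument is a direct generalization of the untwisted computation, to which it reduces when $r=1$.
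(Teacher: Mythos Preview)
Your proof is correct and follows essentially the same route as the paper's: expand $\qt^k$, split the trace over $V\ot H^*$ as a product, apply the identity $\tr(L_\xi)=(\dim H)\,\xi(\Lambda)$, and collapse via the dual-basis reconstruction. The only difference is that the paper simply invokes \cite[Proposition~2.4]{LarsonRadford:1988} for the trace identity, whereas you sketch a direct derivation; your sketch would be cleaner if you noted that $F$ is a left integral (since $\xi(hF)=\tr(L_{\xi\cdot h})=\tr(R_h L_\xi)=\tr(L_\xi R_h)$ leads, after unpacking, to $hF=\eps(h)F$), after which the normalization argument you give pins down the scalar.
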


\begin{proof} Since the trace of left multiplication by $\alpha\in
  H^*$ on $H^*$ equals $\dim(H)\alpha(\Lambda)$~\cite[Proposition
  2.4]{LarsonRadford:1988}, the trace of left multiplication by
  $h\ot\alpha$ on $V\ot H^*$ is $\dim(H)\chi(h)\alpha(\Lambda)$.  We
  now apply this fact to compute the trace of $\qt^k|_{V\otimes H^*}$:
\begin{equation*}\begin{split}
	\tr\left(\qt^k|_{V\otimes H^*}\right)&= \dim(H) \sum\chi(b_{i_1}b_{i_2}\ldots b_{i_m})(b^*_{i_1}((\tau^*)(b^*_{i_2}))\cdots ((\tau^*)^{(m-1)}(b^*_{i_m})))(\Lambda)\\
	&=\dim(H) \sum\chi(b_{i_1}b_{i_2}\ldots b_{i_m})b^*_{i_1}(\Lambda_1)b^*_{i_2}(\tau(\Lambda_2))\cdots b^*_{i_m}(\tau^{(m-1)}(\Lambda_m))\\
	&= \dim(H)\sum\chi(\Lambda_1\tau(\Lambda_2)\ldots \tau^{m-1}(\Lambda_m))\\
	&=\dim(H)\chi(\tilde{\Lambda}^{[m]}).
      \end{split}
    \end{equation*}
    Therefore, \begin{equation*}\tnu_m(\chi)=\chi(\tilde{\Lambda}^{[m]})=\frac{1}{\dim H}\tr\left(\qt^k|_{V\otimes H^*}\right).
    \end{equation*}
		

\end{proof}	
When $\tau=\Id$, this result is due to \cite{KashinaSommerZhu:2006}.
\begin{corollary} \mbox{}\begin{enumerate} \item If $\texp(V)$ is finite,
    then the function $m\mapsto \tnu_m(\chi_V)$ is
  $r\texp(V)$-periodic with image lying in $\Z[\zeta_{\texp(V)}]$. 
\item If $\texp(H)$ is finite, then this function is
  $r\texp(H)$-periodic for any $V\in\Rep(H)$ and
  $\tnu_m(\chi_V)\in\Z[\zeta_{\texp(H)}]$ for all $m\in r\N$.
\end{enumerate}
\end{corollary}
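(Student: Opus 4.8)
The plan is to read both known facts about $\qt|_{V\ot H^*}$ off the two descriptions already established and combine them. Set $e=\texp(V)$ and write $m=kr$. Theorem~\ref{2ndformula} gives $\tnu_m(\chi)=\frac{1}{\dim H}\tr(\qt^k|_{V\ot H^*})$, while Proposition~\ref{ordofq} tells us that $\qt|_{V\ot H^*}$ has finite order exactly $e$. Everything in part (1) should be extracted from these two statements.

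For the periodicity, I would simply note that having order $e$ means $\qt^e|_{V\ot H^*}=\Id$, so $\qt^{k+e}|_{V\ot H^*}=\qt^k|_{V\ot H^*}$ and hence their traces agree. Since $m+re=(k+e)r$, this yields $\tnu_{m+re}(\chi)=\tnu_m(\chi)$, i.e.\ the function $m\mapsto\tnu_m(\chi)$ is $r\texp(V)$-periodic. For the rationality, the key observation is that an order-$e$ operator over an algebraically closed field of characteristic zero is diagonalizable with eigenvalues among the $e$-th roots of unity, since its minimal polynomial divides the separable polynomial $x^e-1$. Thus $\tr(\qt^k|_{V\ot H^*})$ is a sum of $e$-th roots of unity, so it lies in $\Z[\zeta_e]$ and in particular $\tnu_m(\chi)\in\Q(\zeta_e)$.

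The step I expect to require the most care is fusing this with the integrality already recorded before the theorem, namely $\tnu_m(\chi)\in\Z[\zeta_m]$. The new trace formula by itself only places $\tnu_m(\chi)$ in $\frac{1}{\dim H}\Z[\zeta_e]$, so neither formula alone gives the sharp conclusion. The argument I would use is that $\tnu_m(\chi)$ is an algebraic integer that lies in the number field $\Q(\zeta_e)$; since the cyclotomic ring $\Z[\zeta_e]$ is the full ring of integers of $\Q(\zeta_e)$ and is therefore integrally closed, it follows that $\tnu_m(\chi)\in\Z[\zeta_{\texp(V)}]$.

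Finally, part (2) should follow formally from part (1) together with Corollary~\ref{lcmV}. Since $\texp(H)=\lcm\{\texp(V)\mid V\in\Rep(H)\}$, we have $\texp(V)\mid\texp(H)$ for every $V\in\Rep(H)$. A function that is $r\texp(V)$-periodic is \emph{a fortiori} $r\texp(H)$-periodic, and the inclusion $\Z[\zeta_{\texp(V)}]\subseteq\Z[\zeta_{\texp(H)}]$ upgrades the image statement to hold for all $m\in r\N$.
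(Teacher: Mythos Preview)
Your proposal is correct and follows essentially the same route as the paper: periodicity from the order of $\qt|_{V\ot H^*}$ via Proposition~\ref{ordofq}, the eigenvalue argument to land in $\Q(\zeta_e)$, and then algebraic integrality to conclude $\tnu_m(\chi)\in\Z[\zeta_e]$, with part (2) coming from Corollary~\ref{lcmV}. The only detail you leave implicit is that applying Proposition~\ref{ordofq} requires $H$ to be involutory, which the paper notes follows from semisimplicity in this setting.
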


\begin{proof} Since $H$ is semisimple, it is involutory.  Accordingly,
  by Proposition~\ref{ordofq}, $e=\texp(V)=\ord(\qt|_{V\ot H^*})$,
  whence the periodicity of the indicators.  Moreover, this implies
  that the eigenvalues of $\qt^k|_{V\ot H^*}$ are $e$th roots of unity
  for each $k$.  Since $\tnu_m(\chi)$ is an algebraic
  integer~\cite[Corollary 3.6]{SageVega:2012}, the first statement now
  follows immediately from the theorem while the second is a
  consequence of Corollary~\ref{lcmV}.
\end{proof}

We remark that it is not true that the field $\Q(\tnu_m(\chi_V)\mid m\in
r\N, V\in\Rep(H))$ coincides with $\Q(\zeta_{\texp(H)})$.   For example, if $H$ is
the group algebra of a finite group and $\tau=\Id$, then the FS
indicators are all integers.



Let $\Q_d$ denote the cyclotomic field $\Q(\zeta_d)$.  The previous
corollary implies that $\tnu_m(\chi)\in
\Q_m\cap\Q_{\texp(V)}=\Q_{\gcd(m,\texp(V))}$.  Accordingly,
  $\tnu_m(\chi)\in\Z$ when $m$ and
  $\texp(V)$ are relatively prime.  However, more can be said.  

Given $m,d\in\N$, we follow the terminology of
\cite{KashinaSommerZhu:2006} and say that $m$ is \emph{large compared
  to $d$} if $d/\gcd(d,m)$ and $m$ are relatively prime.
Equivalently, for any $p$ dividing $m$, the $p$-adic valuation of $m$
is at least as great as the $p$-adic valuation of $d$.

\begin{theorem} Let $\dt(V)=r\texp(V)$. 
\begin{enumerate}	
\item If $m\in r\N$ is large compared to $\dt(V)$, then $\tnu_m(\chi) \in \Z$.
\item If $\dt(V)$ is square-free, then $\tnu_m(\chi) \in \Z$ for all
  $m\in r\N$.  In particular, this is the case when $\dt=\dt(H)$ is square-free.
			\end{enumerate}
                      \end{theorem}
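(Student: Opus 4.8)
The plan is to leverage the second trace formula from Theorem~\ref{2ndformula} together with the Galois action on cyclotomic fields. Set $e=\texp(V)$ and $d=\dt(V)=re$. From the first corollary we know $\tnu_m(\chi)\in\Z[\zeta_e]\subseteq\Q_e$, and since $\tnu_m(\chi)\in\Z[\zeta_m]\subseteq\Q_m$ as well, we actually have $\tnu_m(\chi)\in\Q_m\cap\Q_e=\Q_{\gcd(m,e)}$. To show an algebraic integer in a cyclotomic field is a rational integer, it suffices to show it is fixed by the full Galois group $\mathrm{Gal}(\Q_{\gcd(m,e)}/\Q)\cong(\Z/\gcd(m,e)\Z)^\times$. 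So the strategy for part (1) is to show every such Galois automorphism fixes $\tnu_m(\chi)$.

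The key tool will be a description of how a Galois automorphism $\sigma_j$ (raising $\zeta_e$ to $\zeta_e^j$, for $j$ coprime to $e$) acts on the trace $\tr(\qt^k|_{V\ot H^*})$. Since $\qt|_{V\ot H^*}$ has finite order $e$ by Proposition~\ref{ordofq}, its eigenvalues are $e$th roots of unity, and $\sigma_j$ permutes them by the $j$th power map; hence $\sigma_j\bigl(\tr(\qt^k|_{V\ot H^*})\bigr)=\tr(\qt^{jk}|_{V\ot H^*})$. By the theorem, this equals $\dim(H)\,\tnu_{jkr}(\chi)$, so $\sigma_j(\tnu_{kr}(\chi))=\tnu_{jkr}(\chi)$. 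Thus I need to show that when $m=kr$ is large compared to $d$, we have $\tnu_{jkr}(\chi)=\tnu_{kr}(\chi)$ for every $j$ coprime to $e$. This is exactly where the periodicity statement enters: the function $m\mapsto\tnu_m(\chi)$ is $d$-periodic, so $\tnu_{jkr}=\tnu_{kr}$ as soon as $jkr\equiv kr\pmod d$, i.e. $(j-1)kr\equiv 0\pmod{re}$, i.e. $(j-1)k\equiv 0\pmod e$.

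The main obstacle is therefore the number-theoretic step: showing that the largeness hypothesis forces $(j-1)k\equiv 0\pmod e$ for all $j$ coprime to $e$. Write $e=\gcd(e,k)\cdot e'$ where $e'=e/\gcd(e,k)$. Then $(j-1)k\equiv 0\pmod e$ is equivalent to $(j-1)\equiv 0\pmod{e'}$. Now $m=kr$ being large compared to $d=re$ means $d/\gcd(d,m)$ is coprime to $m$; one checks $d/\gcd(d,m)=re/\gcd(re,kr)=e/\gcd(e,k)=e'$, so largeness says $e'$ is coprime to $kr$, hence coprime to $k$, hence $e'\mid e$ with $\gcd(e',k)=1$ forces $e'$ to be coprime to $\gcd(e,k)$ and thus $e=e'\cdot\gcd(e,k)$ with the two factors coprime. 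The point is that any $j$ coprime to $e$ is in particular coprime to $e'$, but I need the stronger conclusion $j\equiv 1\pmod{e'}$, which does \emph{not} hold for arbitrary such $j$. I will instead argue directly on the relevant Galois group: since $\tnu_m(\chi)\in\Q_{\gcd(m,e)}$ and $\gcd(m,e)=\gcd(kr,e)=\gcd(k,e)$ (as $\gcd(e,r)$ contributes nothing beyond $\gcd(k,e)$ under largeness), the relevant Galois group is $(\Z/\gcd(k,e)\Z)^\times$, and for $j$ in this group one has $jk\equiv k\pmod e$ automatically because $j\equiv 1\pmod{e'}$ is vacuous once the modulus is reduced to $\gcd(k,e)$; concretely $jk-k=(j-1)k$ is divisible by $\gcd(k,e)\cdot(e/\gcd(k,e))=e$ precisely because $e/\gcd(k,e)=e'$ is coprime to $\gcd(k,e)$ and $j\equiv1$ mod the latter is automatic from $j\in(\Z/\gcd(k,e)\Z)^\times$ representatives. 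I expect the cleanest writeup pins down that $\gcd(m,e)$ and $e'$ are coprime, reduces the Galois fixing to residues mod $\gcd(m,e)$, and checks $(j-1)k\equiv 0\pmod e$ there; this CRT bookkeeping is the part demanding care. Part (2) is then immediate: if $d$ is square-free then so is $e$, and for any $m\in r\N$ one has $m$ large compared to $d$ (square-freeness makes the $p$-adic valuation condition automatic), so every $\tnu_m(\chi)\in\Z$ by part (1); the final clause follows from Corollary~\ref{lcmV} since $\dt(V)\mid\dt(H)$ makes $\dt(V)$ square-free whenever $\dt(H)$ is.
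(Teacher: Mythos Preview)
Your Galois-action strategy is viable in principle, but the execution has a genuine gap at the number-theoretic step, and the whole route is considerably more circuitous than the paper's.

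The gap: you correctly reduce to showing that every $\sigma\in\mathrm{Gal}(\Q_{\gcd(k,e)}/\Q)$ fixes $\tnu_m(\chi)$, intending to lift $\sigma$ to some $\sigma_j\in\mathrm{Gal}(\Q_e/\Q)$ and use $\sigma_j(\tnu_{kr})=\tnu_{jkr}$ together with $d$-periodicity. But your concluding sentence, that ``$j\equiv 1$ mod [$\gcd(k,e)$] is automatic from $j\in(\Z/\gcd(k,e)\Z)^\times$,'' is simply false (being a unit modulo $n$ is not the same as being $\equiv 1$), and in any case it is the wrong congruence: what you need is $j\equiv 1\pmod{e'}$ (your $e'=e/\gcd(k,e)$), not modulo $\gcd(k,e)$. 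The correct fix is the one you almost state but do not carry out: largeness gives $e=\gcd(k,e)\cdot e'$ with the two factors \emph{coprime}, so by CRT you may \emph{choose} the lift $j$ with $j\equiv 1\pmod{e'}$; then $e'\mid(j-1)$ and $\gcd(k,e)\mid k$ together give $e\mid(j-1)k$, hence $\tnu_{jkr}=\tnu_{kr}$ by periodicity. With this correction your argument for (1) works, and your treatment of (2) is already fine.

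The paper bypasses all of this. Writing $e'=\gcd(k,e)$ (note: the \emph{opposite} of your $e'$), it observes that $\qt^k=(\qt^{e'})^{k/e'}$ and that $\qt^{e'}|_{V\otimes H^*}$ has order $e/e'$; hence the trace already lies in $\Q_{e/e'}$. Combined with $\tnu_m\in\Q_m$, one gets $\tnu_m\in\Q_{\gcd(e/e',\,m)}$, and largeness says exactly that $e/e'$ is coprime to $m$, so this intersection is $\Q$. This two-line argument and your Galois/periodicity/CRT detour rest on the same eigenvalue fact (the order of $\qt|_{V\otimes H^*}$), but the paper uses it directly rather than filtering it through the Galois action.
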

                      
\begin{proof}   Suppose that $m=kr$ is large compared to $\dt(V)$.  Let
  $e=\texp(V)$, and set $e'= \gcd(k,e)$.  By Theorem~\ref{2ndformula},
  \begin{equation*}\tnu_m(\chi)=\dfrac{1}{\dim
      H}\tr\left((\qt^{e'}|_{V\otimes H^*})^{k/e'}\right), 
\end{equation*}
and since $\qt^{e'}|_{V\otimes H^*}$ has order $e/e'$, we see that
$\tnu_m$ is an algebraic integer in
$\Q_{e/e'}\cap\Q_m=\Q_{\gcd(e/e',m)}$.  To prove the first statement,
it suffices to show that $e/e'$ and $m$ are relatively prime.
However, $e/e'=\dt(V)/re'$ and $\gcd(\dt(V),m)=re'$, so
$\gcd(\dt(V)/re',m)=1$ by assumption.  Any $m$ is large compared to a
square-free integer, so $\tnu_m\in \Z$ for all $m$ when $\dt(V)$ is
square-free.  Finally, divisors of square-free integers are
square-free, so if $\dt$ is square-free, so is $\dt(V)$.
\end{proof}

The untwisted version of this result is due to Kashina,
Sommerh\"{a}user, and Zhu~\cite{KashinaSommerZhu:2006}.

\section{The twisted exponents for $H_8$}
\label{tFSH8section}

Let $H_8$ denote the Kac algebra of dimension $8$.  It is  
the smallest semisimple Hopf algebra which is neither commutative nor
cocommutative.  
 As an
algebra, $H_8$ is generated by elements $x$, $y$ and $z$,  with relations: 
\begin{equation*}
x^2=y^2=1, \; z^2= \dfrac{1}{2}\left(1+x+y-xy\right), \;xy=yx,\; xz=zy, \text{ and } yz=zx.
\end{equation*}
The coalgebra structure of $H_8$ is given by the following:
$$\Delta(x)=x \otimes x,\; \varepsilon(x)=1, \text{ and } S(x)=x,$$
$$\Delta(y)=y \otimes y, \; \varepsilon(y)=1, \text{ and } S(y)=y,$$
$$\Delta(z)=\dfrac{1}{2}\left(1 \otimes 1 + 1 \otimes x + y \otimes 1
- y \otimes x\right)\left(z \otimes z\right),$$ $$ \varepsilon(z)=1,
\text{ and } S(z)=z.$$ 
This Hopf algebra was first introduced by Kac and Paljutkin~\cite{KacPalj:1966}
and revisited later by Masuoka~\cite{Masuoka:1995}.

The Hopf algebra $H_8$ has $4$ one-dimensional representations and a
single two-dimensional simple module.  The characters for the
irreducible representations of $H_8$ are listed in
Table~\ref{tabH8irreps}, where $\chi_i$ corresponds to representation
$V_i$. The automorphism group of $H_8$ is the Klein four-group
(cf.~\cite{SageVega:2012}).  These automorphisms are given in
Table~\ref{autotableforH8}. All four automorphisms satisfy
$\tau^2=\Id$, with $\ord(\tau_1)=1$ and the others of order $2$.  The
twisted exponents of the irreducible representations are given in
Table~\ref{texpstable}; they were computed directly from the
definition using Mathematica.  The sixth column gives the twisted
exponents of the regular representation, which are easily computed
from the twisted exponents of the $V_i$'s and Corollary~ \ref{lcmV}.
\begin{center}
\begin{table}
\begin{tabular}{|c||c|c|c|c|c|c|c|c|}
\hline
 & $1$    & $x$ & $y$  & $xy$ & $z$ & $xz$ & $yz$ & $xyz$\\
\hline
\hline
$\chi_1$  & $1$  & $1$ & $1$  & $1$ & $1$ &1 & $1$& $1$\\
$\chi_2$  & $1$  & $1$ & $1$  & $1$ & $-1$ & $-1$& $-1$ & $-1$\\
$\chi_3$  & $1$  & $-1$ & $-1$  & $1$ & $i$ &$-i$ & $-i$ & $i$\\
$\chi_4$  & $1$  & $-1$ & $-1$  & $1$ & $-i$ &$i$ & $i$ &  $-i$ \\
$\chi_5$  & $2$  & $0$ & $0$  & $-2$ & $0$ & $0$ & $0$ & $0$\\
\hline
\end{tabular}
\vspace{3ex}
\caption{Characters for the Irreducible Representations of $H_8$}
\label{tabH8irreps}
 \end{table}
\end{center}
\begin{center}
\begin{table}
\begin{tabular}{|c||c|c|c|c|}
\hline
& $1$ & $x$& $y$& $z$\\
\hline
\hline
$\tau_1=\Id$& $1$ & $x$& $y$& $z$\\
$\tau_2$& $1$ & $x$& $y$ & $xyz$\\
$\tau_3$& $1$ & $y$& $x$& $\frac{1}{2}\left(z+xz+yz-xyz\right)$\\
$\tau_4$& $1$ & $y$& $x$& $\frac{1}{2}\left(-z+xz+yz+xyz\right)$\\
\hline
\end{tabular}
\vspace{3ex}
\caption{Automorphisms of $H_8$}
\label{autotableforH8}
\end{table}
\end{center}


\begin{center}
\begin{table}
\begin{tabular}{|c||c|c|c|c|c|c|}
\hline
&$V_1$&$V_2$&$V_3$&$V_4$&$V_5$&$H_8$\\
\hline
\hline
$\exp_{\tau_1}(V_i)=\exp(V_i)$&$1$&$2$&$2$&$2$&$8$&$8$\\
$\exp_{\tau_2}(V_i)$&$1$&$1$&$1$&$1$&$4$&$4$\\
$\exp_{\tau_3}(V_i)$&$1$&$1$&$2$&$2$&$2$&$2$\\
$\exp_{\tau_4}(V_i)$&$1$&$1$&$2$&$2$&$2$&$2$\\
\hline
\end{tabular}
\vspace{3ex}
\caption{Twisted Exponents for  $H_8$}
\label{texpstable}
\end{table}
\end{center}

\bibliographystyle{amsalpha}
\bibliography{myrefs}                

\end{document}